\newtheorem{theorem}{Theorem}
\newtheorem{lemma}[theorem]{Lemma}
\newtheorem{prop}[theorem]{Proposition}
\newtheorem{corollary}[theorem]{Corollary}
\theoremstyle{remark}
\newtheorem{remark}{Remark}
\newtheorem{example}[remark]{Example}
\newtheorem{question}[remark]{Question}
\theoremstyle{definition}
\newtheorem*{acknowledgement}{Acknowledgement}
\begin{document}
\author{Denis I.~Saveliev
\footnote{
Russian Academy of Sciences, 
Steklov Mathematical Institute and 
Institute for Information Transmission Problems.
The work was partially supported by grant 
16-11-10252 of the Russian Science Foundation.}
}
\title{On first-order expressibility 
of satisfiability in submodels}
\date{\normalsize{2016, revised June 2019}}
\maketitle


\theoremstyle{plain}
\newtheorem*{tm}{Theorem}
\newtheorem*{lm}{Lemma}
\newtheorem*{prp}{Proposition}
\newtheorem*{cor}{Corollary}

\theoremstyle{definition}
\newtheorem*{df}{Definition}
\newtheorem*{exm}{Example}
\newtheorem*{mdl}{Model} 
\newtheorem*{prb}{Problem}

\theoremstyle{remark}
\newtheorem*{pf}{Proof}
\newtheorem*{rmk}{Remark}
\newtheorem*{hrmk}{Historical remarks}
\newtheorem*{wrn}{Warning}
\newtheorem*{q}{Question}
\newtheorem*{qs}{Questions}
\newtheorem*{ex}{Example}
\newtheorem*{exs}{Examples}

\newcommand{\dom}{ {\mathop{\mathrm {dom\,}}\nolimits} }
\newcommand{\ran}{ {\mathop{\mathrm{ran\,}}\nolimits} }

\newcommand{\id}{ {\mathop{\mathrm {id}}\nolimits} }
\newcommand{\nat}{ {\mathop{\mathrm {nat}}\nolimits} }

\newcommand{\cf}{ {\mathop{\mathrm {cf\,}}\nolimits} }
\newcommand{\cl}{ {\mathop{\mathrm {cl\,}}\nolimits} }
\newcommand{\cof}{ {\mathop{\mathrm {cof\,}}\nolimits} }
\newcommand{\add}{ {\mathop{\mathrm {add\,}}\nolimits} }
\newcommand{\sat}{ {\mathop{\mathrm {sat\,}}\nolimits} }
\newcommand{\tc}{ {\mathop{\mathrm {tc\,}}\nolimits} }
\newcommand{\unif}{ {\mathop{\mathrm {unif\,}}\nolimits} }
\newcommand{\fp}{ {\mathop{\mathrm {fp\,}}\nolimits} }
\newcommand{\fs}{ {\mathop{\mathrm {fs\,}}\nolimits} }
\newcommand{\pr}{ {\mathop{\mathrm {pr\/}}\nolimits} }
\newcommand{\uhr}{\!\upharpoonright\!}
\newcommand{\lra}{ {\leftrightarrow} }
\newcommand{\ot}{ {\mathop{\mathrm {ot\,}}\nolimits} }
\newcommand{\ol}{\overline}
\newcommand{\cnc}{ {^\frown} }
\newcommand{\image}{\/``\,}
\newcommand{\scc}{\beta\!\!\!\!\beta}
\newcommand{\wt}{\widetilde}

\newcommand\bigforall{\mbox{\Large$\mathsurround0pt\forall$}}
\newcommand\Bigforall{\mbox{\LARGE$\mathsurround0pt\forall$}}
\newcommand\bigexists{\mbox{\Large$\mathsurround0pt\exists$}}
\newcommand\Bigexists{\mbox{\LARGE$\mathsurround0pt\exists$}}

\newcommand{\J}{ {\mathrm J} }
\newcommand{\PC}{ {\mathrm {PC}} }
\newcommand{\PRA}{ {\mathrm {PRA}} }
\newcommand{\T}{ {\mathrm T} }
\newcommand{\TA}{ {\mathrm {TA}} }
\newcommand{\PA}{ {\mathrm {PA}} }
\newcommand{\KP}{ {\mathrm {KP}} }
\newcommand{\Z}{ {\mathrm Z} }
\newcommand{\ZF}{ {\mathrm {ZF}} }
\newcommand{\ZFA}{ {\mathrm {ZFA}} }
\newcommand{\ZFC}{ {\mathrm {ZFC}} }
\newcommand{\AEx}{ {\mathrm {AE}} }
\newcommand{\AR}{ {\mathrm {AR}} }
\newcommand{\WR}{ {\mathrm {WR}} }
\newcommand{\AF}{ {\mathrm {AF}} }
\newcommand{\AC}{ {\mathrm {AC}} }
\newcommand{\GC}{ {\mathrm {GC}} }
\newcommand{\DC}{ {\mathrm {DC}} }
\newcommand{\AD}{ {\mathrm {AD}} }
\newcommand{\AInf}{ {\mathrm {AInf}} }
\newcommand{\AU}{ {\mathrm {AU}} }
\newcommand{\AP}{ {\mathrm {AP}} }
\newcommand{\PI}{ {\mathrm {PI}} }
\newcommand{\CH}{ {\mathrm {CH}} }
\newcommand{\GCH}{ {\mathrm {GCH}} }
\newcommand{\APr}{ {\mathrm {APr}} }
\newcommand{\ASp}{ {\mathrm {ASp}} }
\newcommand{\ARp}{ {\mathrm {ARp}} }
\newcommand{\AFA}{ {\mathrm {AFA}} }
\newcommand{\BAFA}{ {\mathrm {BAFA}} }
\newcommand{\FAFA}{ {\mathrm {FAFA}} }
\newcommand{\SAFA}{ {\mathrm {SAFA}} }
\newcommand{\nonempty}{ {\mathrm {nonempty}} }
\newcommand{\I}{ {\mathrm {I}} }
\newcommand{\II}{ {\mathrm {II}} }
\newcommand{\lex}{ {\mathrm {lex}} }

\begin{abstract}
Let $\kappa,\lambda$ be regular cardinals, 
$\lambda\le\kappa$, let $\varphi$ be a~sentence of 
the language $\mathcal L_{\kappa,\lambda}$ in a~given 
signature, and let $\vartheta(\varphi)$ express 
the fact that $\varphi$~holds in a~submodel, i.e., 
any model~$\mathfrak A$ in the signature satisfies 
$\vartheta(\varphi)$ if and only if some 
submodel~$\mathfrak B$ of~$\mathfrak A$ satisfies~$\varphi$. 
It was shown in~\cite{Saveliev Shapirovsky 2018} that,  
whenever $\varphi$~is in $\mathcal L_{\kappa,\omega}$ in 
the signature having less than~$\kappa$ functional symbols 
(and arbitrarily many predicate symbols), then 
$\vartheta(\varphi)$ is equivalent to a~monadic 
existential sentence in the second-order language
$\mathcal L^{2}_{\kappa,\omega}$, and that for any 
signature having at least one binary predicate symbol 
there exists $\varphi$ in $\mathcal L_{\omega,\omega}$ 
such that $\vartheta(\varphi)$ is not equivalent to any
(first-order) sentence in~$\mathcal L_{\infty,\omega}$. 
Nevertheless, in certain cases $\vartheta(\varphi)$ are 
first-order expressible. In this note, we provide several 
(syntactical and semantical) characterizations of the case 
when $\vartheta(\varphi)$ is in $\mathcal L_{\kappa,\kappa}$ 
and $\kappa$ is $\omega$ or a~certain large cardinal.
\end{abstract}


Given a~model-theoretic language~$\mathcal L$ 
(in sense of~\cite{Barwise Feferman}) and 
a~sentence~$\varphi$ in~$\mathcal L$, let 
$\vartheta(\varphi)$ express the fact that 
$\varphi$~is satisfied in a~submodel. 
Thus for any model~$\mathfrak A$,
$$
\mathfrak A\vDash\vartheta(\varphi)
\;\text{ iff }\;
\mathfrak B\vDash\varphi
\text{ for some submodel~$\mathfrak B$ of }\mathfrak A.
$$ 
We study when $\vartheta(\varphi)$, considered a~priori 
as a~meta-expression, is equivalent to a~sentence in 
another (perhaps, the same) given model-theoretic 
language~$\mathcal L'$. Such questions naturally arise in 
studies of modal logics of submodels; if $\mathcal L$~is 
closed under~$\vartheta$, then $\vartheta$~induces a~modal 
operator on sentences (where a~possibility of~$\varphi$ 
means the satisfiability of~$\varphi$ in a~submodel),
and the resulting modal logic can be regarded as 
a~fragment of~$\mathcal L$ with the submodel relation on 
a~given class of models. These logics are an instance 
of modal logics of various model-theoretic relations, 
which were introduced and studied 
in~\cite{Saveliev Shapirovsky 2018}; another instance is 
modal logic of forcing (see, e.g.,~\cite{Hamkins Lowe}). 
As another source of motivation for studies undertaken 
in this note, let us point out the paper~\cite{Montague}
discussing reduction of higher-order logics to 
second-order one.

Here we concentrate on first-order languages 
$\mathcal L_{\kappa,\lambda}$. Recall that 
$\mathcal L_{\omega,\omega}$ is the usual first-order 
finitary language; $\mathcal L_{\kappa,\lambda}$ expands 
it by involving Boolean connectives of any arities~$<\kappa$ 
and quantifiers over~$<\lambda$ first-order variables, 
where $\lambda\le\kappa$ are given regular cardinals; 
and $\mathcal L_{\infty,\lambda}$~is the union 
of $\mathcal L_{\kappa,\lambda}$ for all~$\kappa$;
see \cite{Barwise Feferman}, \cite{Drake},~\cite{Kanamori}. 
It was shown in~\cite{Saveliev Shapirovsky 2018} that, 
even for $\varphi$ in~$\mathcal L_{\omega,\omega}$,
it is possible that $\vartheta(\varphi)$ is not 
in~$\mathcal L_{\infty,\omega}$; on the other hand, 
$\vartheta(\varphi)$~is equivalent to a~second-order 
(in general, infinitary) sentence; these results are 
reproduced as Theorems \ref{non-expressibility} 
and~\ref{expressibility} below. 
Nevertheless, in certain cases $\vartheta(\varphi)$ are
first-order expressible. In this note, we provide several
(syntactical as well as semantical) characterizations of 
the case when $\vartheta(\varphi)$ is equivalent to 
a~sentence in $\mathcal L_{\kappa,\kappa}$ and $\kappa$ 
is either~$\omega$ or a~certain large (e.g., compact) 
cardinal.

We start with some obvious observations, which confirm, 
in particular, that $\vartheta$~behaves like 
an S4~possibility operator.

\begin{prop}\label{connectives}
For any sentences $\varphi$, $\psi$, and $\varphi_i$, 
$i\in I$, in a~model-theoretic language~$\mathcal L$ 
involving the syntactic operations under consideration, 
we have:
\begin{itemize}
\setlength\itemsep{-0.3em}
\item[(i)]
$\vartheta(\top)$ is equivalent to $\top$, and 
$\vartheta(\bot)$ is equivalent to $\bot$; 
\item[(ii)]
$\varphi$ implies $\vartheta(\varphi)$; 
\item[(iii)]
$\vartheta(\varphi)$ is 
equivalent to $\vartheta(\vartheta(\varphi))$;
\item[(iv)]
$\varphi\to\psi$ implies 
$\vartheta(\varphi)\to\vartheta(\psi)$;
\item[(v)]
$\vartheta(\bigwedge_{i\in I}\varphi_i)$ implies
$\bigwedge_{i\in I}\vartheta(\varphi_i)$, and
$\vartheta(\bigvee_{\!i\in I}\varphi_i)$ is 
equivalent to $\bigvee_{\!i\in I}\vartheta(\varphi_i)$;
\item[(vi)]
$\neg\,\vartheta(\varphi)$ implies $\neg\,\varphi$, 
and $\neg\,\varphi$ implies $\vartheta(\neg\,\varphi)$;
\item[(vii)]
$\vartheta(\neg\,\vartheta(\varphi))$ implies 
$\vartheta(\neg\,\varphi)$. 
\end{itemize}
\end{prop}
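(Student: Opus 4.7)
The plan is to unfold each item directly from the definition of $\vartheta$, which says that $\mathfrak A\vDash\vartheta(\varphi)$ iff some submodel $\mathfrak B$ of $\mathfrak A$ satisfies $\varphi$, together with two elementary facts about the submodel relation: reflexivity (each $\mathfrak A$ is a submodel of itself) and transitivity (a submodel of a submodel is a submodel). No model-theoretic machinery beyond this is needed, so the entire proposition amounts to a sequence of one-line verifications.

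First I would dispose of (i), (ii), (iv), and (vi). For (i), reflexivity witnesses $\vartheta(\top)$, while $\vartheta(\bot)$ fails since no model satisfies $\bot$. For (ii), if $\mathfrak A\vDash\varphi$, the reflexivity of the submodel relation immediately gives $\mathfrak A\vDash\vartheta(\varphi)$; this also yields the first half of (vi), since $\neg\,\vartheta(\varphi)$ rules out $\mathfrak A$ itself as a witness, and the second half of (vi) is just (ii) applied to $\neg\,\varphi$. For (iv), an entailment $\varphi\to\psi$ holds in every model, hence in every submodel witnessing $\vartheta(\varphi)$, so the same submodel witnesses $\vartheta(\psi)$.

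Next I would handle the S4-flavored items (iii) and (vii), together with (v). The forward direction of (iii) is (ii) applied to $\vartheta(\varphi)$; the converse, that $\vartheta(\vartheta(\varphi))$ implies $\vartheta(\varphi)$, is exactly transitivity: a submodel of a submodel witnessing $\varphi$ is itself such a witness. For (v), the disjunction case is a bare exchange of the existential quantifiers over $\mathfrak B$ and over $i\in I$, while the conjunction implication is an instance of (iv) applied to each projection $\bigwedge_{i\in I}\varphi_i\to\varphi_j$. For (vii), suppose $\mathfrak A\vDash\vartheta(\neg\,\vartheta(\varphi))$; pick a submodel $\mathfrak B$ with $\mathfrak B\vDash\neg\,\vartheta(\varphi)$, so no submodel of $\mathfrak B$ satisfies $\varphi$; by reflexivity $\mathfrak B$ itself is such a submodel, hence $\mathfrak B\vDash\neg\,\varphi$, and then $\mathfrak A\vDash\vartheta(\neg\,\varphi)$.

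There is no real obstacle in the argument; each clause is essentially immediate. The one conceptual point worth flagging is that everything rests on the tacit background assumption that ``submodel'' denotes a reflexive and transitive relation on the class of models in question, which is precisely what licenses the S4-style items (iii) and (vii) and aligns $\vartheta$ with an S4 possibility operator as claimed in the surrounding discussion.
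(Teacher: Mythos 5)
Your proof is correct and matches the paper's intent: the paper simply declares items (i)--(v) immediate, derives (vi) from (ii), and (vii) from (iv) and (vi), which is exactly the content of your verifications (your direct argument for (vii) is just the composite of those two steps). The only point worth noting is that the reflexivity and transitivity of the submodel relation, which you rightly flag as the tacit backbone, is indeed all that is used.
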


\begin{proof}
Items (i)--(v) are immediate; (vi)~follows from~(ii);
and (vii)~from (iv) and~(vi). 
\end{proof}

\begin{corollary}\label{extensions}
For every sentence~$\varphi$, 
the sentence~$\vartheta(\varphi)$ is preserved 
under extensions of models. A~fortiori, it is 
preserved under elementary extensions, 
unions of increasing chains of models; 
in purely predicate signatures: under 
direct unions, direct products and powers; etc.
\end{corollary}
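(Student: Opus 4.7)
The plan is to reduce the main assertion to a~single fact: the submodel relation is transitive. Unwinding the definition of~$\vartheta$, suppose $\mathfrak A\vDash\vartheta(\varphi)$ and $\mathfrak A$ is a~submodel of some~$\mathfrak A'$. Then there exists a~submodel~$\mathfrak B$ of~$\mathfrak A$ with $\mathfrak B\vDash\varphi$; since ``submodel'' is transitive, $\mathfrak B$ is also a~submodel of~$\mathfrak A'$, and therefore witnesses $\mathfrak A'\vDash\vartheta(\varphi)$. The argument uses no property of~$\varphi$, so preservation under extensions holds uniformly.

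One could alternatively derive this from Proposition~\ref{connectives}: since $\mathfrak A$ is a~submodel of~$\mathfrak A'$ satisfying $\vartheta(\varphi)$, the definition of~$\vartheta$ applied one step further gives $\mathfrak A'\vDash\vartheta(\vartheta(\varphi))$, which by~(iii) is equivalent to~$\vartheta(\varphi)$. I~would mention this second reading to emphasize that the only ingredient is transitivity of the submodel relation, whether used directly or packaged inside~(iii).

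For the ``a~fortiori'' list I would simply observe that each of the listed constructions yields an extension of some constituent and then invoke the preservation already established. An elementary extension is an extension by definition; the union of an~increasing chain of models is an extension of every member of the chain; and in a~purely predicate signature, each factor of a~direct union, direct product, or direct power embeds canonically as a~submodel, since the absence of functional and constant symbols guarantees that any subset of the universe carries an~induced substructure. The only delicate point is spelling out the canonical coordinate embedding of a~factor into a~direct product, and noting that this is where the ``purely predicate'' hypothesis is actually used; beyond this small piece of bookkeeping there is no real obstacle.
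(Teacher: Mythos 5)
Your first two paragraphs are exactly the paper's argument: the paper proves the corollary by citing Proposition~\ref{connectives}\,(iii), i.e.\ precisely the transitivity-of-submodels observation, packaged as $\vartheta(\vartheta(\varphi))\lra\vartheta(\varphi)$. So for the main assertion (preservation under extensions) your proof is correct and coincides with the paper's.

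The genuine gap is in your treatment of direct products. You claim that in a purely predicate signature ``each factor \ldots embeds canonically as a submodel'' of the direct product via the coordinate embedding $a\mapsto(a,\bar c)$ with the other coordinates fixed. The fact that every subset of the product carries an induced substructure is true but beside the point: the induced structure on the image of that map need not be isomorphic to the factor, because relations in a direct product are coordinatewise conjunctions, so positive atomic facts of the factor can be destroyed by the fixed coordinates. Concretely, let $\tau=\{R\}$ with $R$ binary, let $\mathfrak A_1$ be a one-element model with $R(a,a)$ and $\mathfrak A_2$ a one-element model with $\neg R(b,b)$; then $\mathfrak A_1\times\mathfrak A_2$ is a single irreflexive point, so $\mathfrak A_1$ does not embed into the product at all, and with $\varphi=\forall x\,R(x,x)$ one has $\mathfrak A_1\vDash\vartheta(\varphi)$ while $\mathfrak A_1\times\mathfrak A_2\vDash\neg\,\vartheta(\varphi)$. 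Hence ``the product is an extension of each factor'' is false, and the reading of preservation under products as ``some factor satisfies $\vartheta(\varphi)$, hence the product does'' is false as well; the product case must be read in the usual sense (all factors satisfy $\vartheta(\varphi)$) and does not reduce to the extension-preservation you established by the coordinate embedding you describe. What does reduce to it: chain unions and elementary extensions (as you say), direct (disjoint) unions in predicate signatures, where each constituent is literally a submodel, and direct powers, where the \emph{diagonal} embedding $a\mapsto(a)_{i\in I}$ --- not the fixed-coordinate one --- is a genuine embedding since all factors coincide. So the ``small piece of bookkeeping'' you defer is exactly the point at which the argument breaks, and the direct-product clause needs a different justification (e.g.\ mapping a witness submodel of one factor homomorphically into the remaining factors, which is not automatic).
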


\begin{proof}
This follows from item~(iii) of 
Proposition~\ref{connectives}.
\end{proof}

Given cardinals $\kappa,\mu$ with $\mu\ge\kappa$, recall 
that $\kappa$~is $\mu$-\emph{compact} iff the language 
$\mathcal L_{\kappa,\kappa}$ satisfies the
$(\mu,\kappa)$-compactness, i.e., any theory in 
$\mathcal L_{\kappa,\kappa}$ of cardinality~$\le\mu$ 
has a~model whenever each its subtheory 
of cardinality~$<\kappa$ has a~model. 
A~cardinal~$\kappa$ is \emph{weakly compact} iff it is
$\kappa$-compact, and \emph{strongly compact} iff it is
$\mu$-compact for all $\mu\ge\kappa$ (e.g.,~$\omega$~is
strongly compact). By a~compact cardinal~$\kappa$ we shall 
mean strongly compact~$\kappa$, and by an inaccessible, 
a~strongly inaccessible, i.e., a~regular~$\kappa$ 
such that $2^\lambda<\kappa$ for all $\lambda<\kappa$. 
For more on these and other large cardinals and their
connections with infinitary languages, we refer 
the reader to \cite{Drake},~\cite{Kanamori}.

\begin{corollary}\label{existential} 
Let $\kappa$ be $\omega$ or, more generally, a~compact 
cardinal, and $\varphi$ a~sentence 
in~$\mathcal L_{\kappa,\kappa}$. 
The following are equivalent:
\begin{itemize}
\setlength\itemsep{-0.3em}
\item[(i)]
$\vartheta(\varphi)$ is equivalent to a~sentence 
in $\mathcal L_{\kappa,\kappa}$;
\item[(ii)]
$\vartheta(\varphi)$ is equivalent to a~sentence  
in $\varSigma^{0}_1(\mathcal L_{\kappa,\kappa})$, i.e., 
an existential sentence in~$\mathcal L_{\kappa,\kappa}$. 
\end{itemize}
\end{corollary}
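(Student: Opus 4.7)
The implication (ii)$\Rightarrow$(i) is immediate, since every existential sentence of $\mathcal L_{\kappa,\kappa}$ is itself a sentence of $\mathcal L_{\kappa,\kappa}$. The substance is in the reverse direction, and my plan is to identify it as an instance of a preservation theorem of \L o\'s--Tarski type for the language $\mathcal L_{\kappa,\kappa}$.

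First I would observe that the hypothesis of~(i) combined with Corollary~\ref{extensions} gives the relevant preservation property: if $\vartheta(\varphi)$ is equivalent to some $\psi\in\mathcal L_{\kappa,\kappa}$, then $\psi$ itself is preserved under arbitrary extensions of models, since $\vartheta(\varphi)$ is. The goal is then to deduce that any such $\psi$ is equivalent to an existential sentence of $\mathcal L_{\kappa,\kappa}$, i.e.\ to a sentence in $\varSigma^0_1(\mathcal L_{\kappa,\kappa})$. Equivalently, working with $\neg\psi$, it suffices to prove the ``dual'' statement that every $\mathcal L_{\kappa,\kappa}$-sentence preserved under submodels is equivalent to a universal $\mathcal L_{\kappa,\kappa}$-sentence.

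To establish that dual statement, I would follow the classical compactness proof of the \L o\'s--Tarski theorem, with $\omega$ replaced by~$\kappa$. Let $\chi\in\mathcal L_{\kappa,\kappa}$ be preserved under submodels, and let $T$ be the set of all universal $\mathcal L_{\kappa,\kappa}$-consequences of~$\chi$. Given $\mathfrak A\vDash T$, one wants $\mathfrak A\vDash\chi$; the key step is to show that the $\mathcal L_{\kappa,\kappa}$-theory $\mathrm{Diag}(\mathfrak A)\cup\{\chi\}$ (in the signature expanded by constants for elements of~$\mathfrak A$) is consistent. If it were not, a subtheory of cardinality $<\kappa$ would already be inconsistent by $(\mu,\kappa)$-compactness (with $\mu=|\mathfrak A|+|\chi|+\kappa$), yielding a universal sentence that is a consequence of $\chi$ but fails in~$\mathfrak A$, a contradiction with $\mathfrak A\vDash T$. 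A model of $\mathrm{Diag}(\mathfrak A)\cup\{\chi\}$ then contains (an isomorphic copy of)~$\mathfrak A$ as a submodel, and since $\chi$ is preserved under submodels we get $\mathfrak A\vDash\chi$. Thus $T$ axiomatizes~$\chi$, and collecting these universal consequences into a single sentence (using conjunctions of size $<\kappa$ and compactness) produces the required universal equivalent of~$\chi$.

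The main obstacle is exactly this compactness step: for $\kappa=\omega$ it is the usual finitary compactness theorem, but for general regular $\kappa$ the corresponding property of $\mathcal L_{\kappa,\kappa}$ is precisely the $(\mu,\kappa)$-compactness invoked in the definition of compact cardinal, and since $|\mathrm{Diag}(\mathfrak A)|$ can be arbitrarily large one needs $(\mu,\kappa)$-compactness for all $\mu\ge\kappa$, that is, strong compactness of~$\kappa$, which matches the hypothesis of the corollary. Once the preservation theorem is available, the passage from~$\psi$ to the desired existential equivalent of $\vartheta(\varphi)$ is immediate by negating both sides.
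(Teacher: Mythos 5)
Your proof is correct and follows essentially the same route as the paper: the paper also reduces (i)$\to$(ii) to the fact (via Corollary~\ref{extensions}) that $\mathcal L_{\kappa,\kappa}$-sentences preserved under extensions are exactly the existential ones, citing the classical \L o\'s--Tarski argument for $\kappa=\omega$ and noting it adapts to strongly compact $\kappa$. The only difference is that you spell out that adaptation (diagram plus $(\mu,\kappa)$-compactness for arbitrary $\mu$), which is precisely the ``modify the same argument'' step the paper leaves to the reader.
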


\begin{proof}
(i)$\to$(ii). 
By Corollary~\ref{extensions} since sentences 
in~$\mathcal L_{\kappa,\kappa}$ that are preserved 
under extensions are exactly existential ones
(for $\kappa=\omega$ see~\cite{Chang Keisler}; for 
compact $\kappa>\omega$, modify the same argument). 
\end{proof}

Two following results on expressibility of 
$\vartheta(\varphi)$ were essentially obtained 
in~\cite{Saveliev Shapirovsky 2018}. They show that 
the expressibility generally does not hold in first-order 
languages but is achieved in appropriate second-order ones.  


\begin{theorem}\label{non-expressibility}
For any signature~$\tau$ having a~predicate symbol~$R$ 
of arity at least~$2$, there exists a~sentence~$\varphi$ 
in $\mathcal L_{\omega,\omega}$ such that 
$\vartheta(\varphi)$ is not equivalent to 
any sentence in~$\mathcal L_{\infty,\omega}$.
\end{theorem}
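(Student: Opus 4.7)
The plan is to find a first-order sentence $\varphi$ such that $\vartheta(\varphi)$ expresses a non-well-foundedness property, which is classically known to lie beyond $\mathcal L_{\infty,\omega}$.

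For $R$ of arity $n \ge 2$, I would define the binary formula $S(x,y) := R(x,y,y,\ldots,y)$ by diagonalisation; since only the variables $x,y$ appear, the relation it defines is preserved under taking submodels. I then take $\varphi := \forall x\,\exists y\,(y \neq x \wedge S(y,x))$, a sentence of $\mathcal L_{\omega,\omega}$. Unwinding the definition of $\vartheta$, one sees that $\mathfrak A \vDash \vartheta(\varphi)$ iff there is a non-empty $B \subseteq A$ such that every $b \in B$ has some $b' \in B$ with $b' \neq b$ and $S^{\mathfrak A}(b',b)$; equivalently, the irreflexive part of $S^{\mathfrak A}$ is not well-founded on $A$.

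Next I would invoke Karp's theorem: every $\mathcal L_{\infty,\omega}$-sentence $\psi$ has an ordinal quantifier rank $\alpha$ and is preserved under the $\alpha$-back-and-forth equivalence $\equiv_\alpha$. To conclude $\vartheta(\varphi) \not\equiv \psi$, it suffices to exhibit, for each ordinal $\alpha$, two models $\mathfrak A_\alpha \equiv_\alpha \mathfrak A'_\alpha$ that differ on $\vartheta(\varphi)$. I would build these as binary irreflexive relations (lifted to the full signature by setting $R(x,y,y,\ldots,y) := S(x,y)$ and $R = \varnothing$ elsewhere when $n > 2$), taking $\mathfrak A_\alpha$ to be a sufficiently long well-order and $\mathfrak A'_\alpha$ its perturbation by an ill-founded tail --- for instance, $\omega^\alpha$ versus $\omega^\alpha \cdot (1+\zeta)$ with $\zeta$ the order type of $\mathbb Z$ --- and then verify the $\alpha$-round Ehrenfeucht--Fra\"iss\'e game equivalence by induction on~$\alpha$. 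The main obstacle is this explicit back-and-forth construction: it amounts to the classical non-expressibility of well-foundedness in $\mathcal L_{\infty,\omega}$ (due essentially to Lopez-Escobar and Karp), carried out in detail in~\cite{Saveliev Shapirovsky 2018}; the surrounding steps of the plan are routine.
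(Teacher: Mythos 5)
Your choice of $\varphi$ and your overall strategy---replace $R$ by a binary relation, observe that $\vartheta(\varphi)$ amounts to ill-foundedness, and defeat every candidate $\mathcal L_{\infty,\omega}$-sentence by exhibiting, for each rank, a back-and-forth equivalent pair consisting of a well-order and a non-well-order---is exactly the paper's strategy, but only for the case the paper dispatches first, namely signatures without functional symbols. The theorem, however, is stated for an arbitrary signature~$\tau$ containing a predicate of arity~$\ge2$, so $\tau$ may also contain function and constant symbols. There your ``unwinding'' of $\vartheta(\varphi)$ (``$\mathfrak A\vDash\vartheta(\varphi)$ iff there is a non-empty $B\subseteq A$ such that \dots'') is no longer valid: submodels must be closed under the operations of~$\tau$, so $\vartheta(\varphi)$ is not equivalent to non-well-foundedness of~$S$. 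Moreover your counterexample models are not fully specified---you say how to interpret $R$, but nothing about the remaining predicate, function, and constant symbols of~$\tau$---so neither the $\equiv_\alpha$-equivalence of the $\tau$-expansions nor their disagreement on $\vartheta(\varphi)$ follows from what you have. This is a genuine gap, and it is precisely the part of the statement that occupies the second half of the paper's proof.

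The paper closes it as follows: take the Karp pair of $\{R\}$-structures (a well-order and a non-well-order that are $\mathcal L_{\kappa,\omega}$-equivalent, arranged to have $R$-greatest elements), interpret every other predicate symbol by the empty relation, every function symbol of positive arity by the projection onto its first argument, and every constant symbol by the $R$-greatest element. Then in both expansions every $\tau$-formula is equivalent to an $\{R\}$-formula, so the equivalence transfers to the full signature; and every subset containing the greatest element is a submodel, so the ill-founded model satisfies $\vartheta(\varphi)$ while the well-ordered one does not, contradicting the assumed $\mathcal L_{\kappa,\omega}$-definability. Adding this (or an analogous) expansion step would make your argument complete; as written it proves the theorem only for purely relational~$\tau$. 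A smaller point: the exact ranks for your pair $\omega^\alpha$ versus $\omega^\alpha\cdot(1+\zeta)$ would need checking, but all the proof requires is that for every rank some such well-ordered/non-well-ordered equivalent pair exists, which is Karp's theorem and is simply cited in the paper rather than reproved.
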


\begin{proof}
Suppose w.l.g.~that $R$~is a~binary predicate symbol 
(otherwise imitate it by using a~predicate symbol of 
a~bigger arity with fixed other arguments). Let $\varphi$ 
be an obvious $\mathcal L_{\omega,\omega}$-sentence 
saying that there exists no $R$-minimal element. Then 
$\neg\,\vartheta(\varphi)$ says that each submodel has 
a~$R$-minimal element. Note that, if $\tau$~has no 
functional symbols, $\neg\,\vartheta(\varphi)$ says that 
$R$~is well-founded. As well-known, the latter property 
is not expressible in~$\mathcal L_{\infty,\omega}$ 
(moreover, it is not RPC in~$\mathcal L_{\infty,\omega}$; 
see, e.g., \cite{Barwise Feferman}, Chapter~9, 
Theorem~3.2.20), which proves the theorem for such~$\tau$. 
In the general case, we argue as follows. 

Toward a~contradiction, assume that there is~$\kappa$ 
such that $\neg\,\vartheta(\varphi)$ is equivalent to 
some $\psi\in\mathcal L_{\kappa,\omega}$.
It follows from Karp's theorem (see, e.g.,
\cite{Rosenstein}, Theorem~14.29) that there 
are models $\mathfrak A_0$ and $\mathfrak B_0$ of the 
subsignature $\tau_0=\{R\}$ such that $\mathfrak A_0$~is 
isomorphic to an ordinal while $\mathfrak B_0$~is not, and 
$
\mathfrak A_0\equiv_{\mathcal L_{\kappa,\omega}}%
\!\mathfrak B_0.
$
Expand $\mathfrak A_0$ and $\mathfrak B_0$ respectively 
to models $\mathfrak A$ and $\mathfrak B$ of~$\tau$ by 
interpreting each predicate symbol other than~$R$ by 
the empty set, each functional symbol of positive arity 
by the projection onto the first argument, and each 
constant symbol by the $R$-last element of the model 
(which w.l.g.~can be assumed to exist). It is easy 
to see that in both $\mathfrak A$ and~$\mathfrak B$ any 
formula of~$\tau$ is equivalent to a~formula of~$\tau_0$; 
so we still have
$
\mathfrak A\equiv_{\mathcal L_{\kappa,\omega}}%
\!\mathfrak B.
$
On the other hand, in both models every subset forms 
a~submodel whenever it contains the $R$-last element of 
the whole model, whence it easily follows that 
$\mathfrak A\vDash\psi$ and $\mathfrak B\vDash\neg\,\psi$. 
A~contradiction. 
\end{proof}


Given a~model-theoretic language~$\mathcal L$, 
let $\mathcal L^{\alpha}$ denote the $\alpha$th-order 
extension of~$\mathcal L$. A~formula of $\mathcal L^{2}$ 
is \emph{monadic} iff it involves only unary predicate
variables, and \emph{existential second-order}, 
respectively, \emph{universal second-order} iff 
it involves only existential, respectively, universal
quantifiers over second-order variables preceding 
a~first-order formula (with arbitrary quantifiers).
The monadic fragment of $\mathcal L^{2}$ consists of 
its monadic formulas; similarly for the existential 
and universal fragments of the language, which will be 
denoted by $\varSigma^{1}_{1}(\mathcal L^{2})$ and
$\varPi^{1}_{1}(\mathcal L^{2})$, respectively.

\begin{theorem}\label{expressibility}
Let $\kappa$ be a~regular cardinal and $\varphi$
a~(first-order) sentence of $\mathcal L_{\kappa,\omega}$ 
in a~signature~$\tau$ with~$<\kappa$~functional 
(including constant) symbols and arbitrarily many 
predicate symbols. 
Then $\vartheta(\varphi)$ is equivalent to a~monadic 
existential formula in $\mathcal L^{2}_{\kappa,\omega}$.
Moreover, the following languages are closed 
under~$\vartheta$: 
\begin{itemize}
\setlength\itemsep{-0.3em}
\item[(i)]
the monadic fragment of $\mathcal L^{2}_{\kappa,\lambda}$
for any $\lambda\le\kappa$;
\item[(ii)]
the existential fragment of $\mathcal L^{2}_{\kappa,\lambda}$
for any $\lambda\le\kappa$;
\item[(iii)]
$\mathcal L^{\alpha}_{\kappa,\lambda}$ 
for any $\lambda\le\kappa$ and $\alpha\ge2$.
\end{itemize}
\end{theorem}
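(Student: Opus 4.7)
The plan is to express $\vartheta(\varphi)$ by existentially quantifying a unary predicate~$P$ that picks out the universe of a candidate submodel, forcing $P$ to be nonempty and closed under $\tau$, and then relativizing $\varphi$ to~$P$. Explicitly, I would take
$$
\vartheta(\varphi)\;\equiv\;\exists P\,\bigl(\,\exists x\,P(x)\;\wedge\;\mathrm{Cl}_\tau(P)\;\wedge\;\varphi^{P}\,\bigr),
$$
where $\mathrm{Cl}_\tau(P)$ is the conjunction, over every constant~$c$ and every $n$-ary function symbol~$f$ of~$\tau$, of the first-order sentences $P(c)$ and $\forall x_1\ldots x_n\,(\bigwedge_{i\le n}P(x_i)\to P(f(x_1,\ldots,x_n)))$, and $\varphi^{P}$ is the standard relativization of $\varphi$ to~$P$ (atomic subformulas being left untouched, since they speak only about~$\tau$). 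Semantically, any $P\subseteq A$ satisfying $\exists x\,P(x)\wedge\mathrm{Cl}_\tau(P)$ is precisely the universe of a submodel $\mathfrak B\le\mathfrak A$, and a routine induction on~$\varphi$ shows $\mathfrak A\vDash\varphi^{P}[P\mapsto B]$ iff $\mathfrak B\vDash\varphi$; so this formula does capture the meta-operator~$\vartheta$.

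For the opening assertion I would then count complexity: $\mathrm{Cl}_\tau(P)$ is a conjunction of fewer than~$\kappa$ first-order sentences each with finitely many (first-order) quantifiers, since functional symbols are finitary; the relativization $\varphi^{P}$ remains in $\mathcal L_{\kappa,\omega}$; and the single second-order quantifier $\exists P$ is monadic. This places $\vartheta(\varphi)$ in monadic $\varSigma^{1}_{1}(\mathcal L^{2}_{\kappa,\omega})$. Parts~(i)--(iii) follow by the same template. For~(i), the relativization of a monadic second-order formula is again monadic and $\exists P$ is a monadic quantifier, so monadicity is preserved. For~(ii), if $\varphi\equiv\exists\bar X\,\psi$ with $\bar X$ a block of existential second-order quantifiers, then $\vartheta(\varphi)$ is equivalent to $\exists P\,\exists\bar X\,(\exists x\,P(x)\wedge\mathrm{Cl}_\tau(P)\wedge\bigwedge_i\forall\bar y(X_i(\bar y)\to\bigwedge_j P(y_j))\wedge\psi^{P})$, again existential second-order. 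For~(iii), since $\alpha\ge2$ the language already contains $\exists P$ and all the higher-order machinery needed to relativize each $k$th-order variable to objects built over~$P$; the resulting formula stays in $\mathcal L^{\alpha}_{\kappa,\lambda}$.

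The only non-bookkeeping step is getting the higher-order relativization right in~(ii) and~(iii): a second- or $\alpha$th-order variable that formally ranges over objects of the full model~$\mathfrak A$ must, when used to simulate satisfaction inside~$\mathfrak B$, be restricted to objects supported entirely by~$P$. This is handled uniformly by conjoining the bounding clauses $\forall\bar y(X_i(\bar y)\to\bigwedge_j P(y_j))$ (and their type-$k$ analogues), which use only connectives and quantifiers already available in the target language; no new logical resources are introduced. Everything else is arithmetic on arities: verifying that the number of conjuncts in $\mathrm{Cl}_\tau(P)$ is indeed $<\kappa$ and that the relativization procedure does not inflate the quantifier block beyond what $\mathcal L_{\kappa,\lambda}$ permits.
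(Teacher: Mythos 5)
Your construction is exactly the paper's: existentially quantify a single unary second-order variable, assert nonemptiness plus closure under the $<\kappa$ functional (and constant) symbols, and relativize $\varphi$, yielding a monadic $\varSigma^{1}_{1}(\mathcal L^{2}_{\kappa,\omega})$ sentence. Your extra bookkeeping for items (i)--(iii) (bounding clauses restricting second- and higher-order variables to objects supported by~$P$) just spells out what the paper leaves implicit, so the proposal is correct and essentially identical in approach.
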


\begin{proof}
If $X$~is a~second-order unary predicate variable, 
for each functional symbol $F$ in~$\tau$ let $\psi_F$
be an $\mathcal L^{2}_{\omega,\omega}$-formula stating 
that $X$~is closed under~$F$, and let $\psi(X)$ be 
the $\mathcal L^{2}_{\kappa,\omega}$-formula 
$$
\exists x\;
\bigl(X(x)\wedge\bigwedge\{\psi_F(X):F
\text{ is a~functional symbol in }\tau\}
\bigr)
$$
stating that $X$~forms a~submodel. Then 
$\vartheta(\varphi)$~is equivalent to the sentence 
$$
\exists X\;\bigl(\psi(X)\wedge\varphi^X\bigr)
$$ 
where $\varphi^X$~is the relativization of $\varphi$
to~$X$. 
%
\end{proof}

As usual, a~filter~$D$ is $\kappa$-\emph{complete} 
iff $\bigcap E\in D$ for all $E\in P_\kappa(D)$, 
where $P_\kappa(A)$ denotes the set of all subsets 
of~$A$ which have cardinality~$<\kappa$.

\begin{corollary}\label{ultraproducts}
Let $\kappa$ be $\omega$ or, more generally, 
a~compact cardinal, and $\varphi$ a~sentence 
in $\mathcal L_{\kappa,\kappa}$ in a~signature
with~$<\kappa$~functional (including constant) symbols. 
Then $\vartheta(\varphi)$ is preserved under 
ultraproducts by $\kappa$-complete ultrafilters.
Moreover, this remains true for sentences~$\varphi$ 
in $\varSigma^{1}_1(\mathcal L^{2}_{\kappa,\kappa})$.
\end{corollary}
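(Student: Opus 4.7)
The plan is to deduce the corollary directly from Theorem~\ref{expressibility}. Recall from its proof that $\vartheta(\varphi)$ is equivalent to the sentence $\exists X\,(\psi(X)\wedge\varphi^X)$, where $\psi(X)$ is the conjunction of ``$X$ is nonempty'' with the closure assertions $\psi_F$ over all functional symbols~$F$ of~$\tau$. Because there are fewer than~$\kappa$ such symbols, $\psi(X)$ lies in $\mathcal L^{2}_{\kappa,\omega}$; hence if $\varphi\in\mathcal L_{\kappa,\kappa}$ then the whole sentence lies in $\varSigma^{1}_{1}(\mathcal L^{2}_{\kappa,\kappa})$, and if $\varphi$~already lies in $\varSigma^{1}_{1}(\mathcal L^{2}_{\kappa,\kappa})$ then, pulling its second-order existentials out past the~$\exists X$ and relativizing the remaining first-order part to~$X$, we still stay within $\varSigma^{1}_{1}(\mathcal L^{2}_{\kappa,\kappa})$. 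Both assertions of the corollary therefore reduce to the single claim that $\varSigma^{1}_{1}(\mathcal L^{2}_{\kappa,\kappa})$-sentences are preserved under ultraproducts by $\kappa$-complete ultrafilters.

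For this I would invoke the generalized \L{}o\'s theorem, which asserts that for any $\kappa$-complete ultrafilter~$D$ on an index set~$I$, any family $\langle\mathfrak A_i:i\in I\rangle$ of models in a fixed signature, and any $\chi\in\mathcal L_{\kappa,\kappa}$, one has $\prod_D\mathfrak A_i\vDash\chi$ iff $\{i:\mathfrak A_i\vDash\chi\}\in D$. This is classical for $\kappa=\omega$ and its extension to compact~$\kappa$ is standard; see \cite{Drake},~\cite{Kanamori}. Given this, let $D$ be $\kappa$-complete on~$I$ and suppose $\mathfrak A_i\vDash\vartheta(\varphi)$ on a $D$-large set. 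For each such~$i$, pick a witness $X_i\subseteq A_i$ (together with interpretations of any further second-order variables appearing in~$\varphi$) making $\psi(X_i)\wedge\chi^{X_i}$ true in $\mathfrak A_i$, where $\chi$ denotes the first-order part.

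Next I would form the subset $X\subseteq\prod_DA_i$ defined by $X=\{[f]_D:\{i:f(i)\in X_i\}\in D\}$, and analogously for the second-order witnesses. Applying \L{}o\'s's theorem to each $\psi_F$ yields a $D$-large set on which the corresponding closure holds, and $\kappa$-completeness of~$D$ permits us to intersect the $<\kappa$ many such sets to get $\prod_D\mathfrak A_i\vDash\psi(X)$. Moreover, because the function symbols of the ultraproduct act componentwise on representatives, the submodel of $\prod_D\mathfrak A_i$ carried by~$X$ coincides with the ultraproduct $\prod_D\mathfrak B_i$ of the submodels $\mathfrak B_i$ induced by the~$X_i$. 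Applying \L{}o\'s's theorem once more to~$\chi$ (within the induced submodel, with the second-order witnesses supplying parameters) gives $\chi^X$ in the ultraproduct, so $\prod_D\mathfrak A_i\vDash\vartheta(\varphi)$.

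No step of this argument is expected to be a genuine obstacle; the only point requiring care is the identification of the submodel on~$X$ with the ultraproduct of the~$\mathfrak B_i$, which is needed to legitimately transfer satisfaction of~$\chi$ across the ultrapower construction, and which follows directly from unpacking the definitions once closure under the operations has been verified.
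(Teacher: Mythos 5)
Your proof is correct and takes essentially the paper's route: both arguments reduce the corollary, via Theorem~\ref{expressibility}, to the fact that $\varSigma^{1}_{1}(\mathcal L^{2}_{\kappa,\kappa})$-sentences are preserved under ultraproducts by $\kappa$-complete ultrafilters. The paper simply cites that preservation fact (Chang--Keisler, Corollary~4.1.14, ``modified'' for compact~$\kappa$), whereas you spell out the modification yourself --- the generalized \L{}o\'s theorem for $\mathcal L_{\kappa,\kappa}$ with $\kappa$-complete ultrafilters plus induced second-order witnesses --- which is exactly the standard argument intended there.
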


\begin{proof}
By Theorem~\ref{expressibility}, for such a~$\varphi$ 
the statement $\vartheta(\varphi)$ is equivalent to 
a~$\varSigma^{1}_1
(\mathcal L^{2}_{\kappa,\kappa})$-sentence, 
therefore, it is preserved under ultraproducts 
(for $\kappa=\omega$ see, e.g., \cite{Chang Keisler}, 
Corollary~4.1.14; for compact~$\kappa$ modify the same 
argument). 
\end{proof}

The next result on non-expressibility of $\vartheta(\varphi)$
shows that the restriction on the number of functional 
symbols in Theorem~\ref{expressibility} is optimal.

\begin{theorem}\label{total non-expressibility}
For any signature~$\tau$ having~$\ge\kappa$~functional 
(e.g., constant) symbols, there exists a~sentence~$\varphi$ 
of $\mathcal L_{\omega,\omega}$ 
(in fact, in the empty signature) such that 
$\vartheta(\varphi)$ is not equivalent to 
any sentence in $\mathcal L^{\alpha}_{\kappa,\kappa}$ for 
all~$\alpha$, and moreover, in every language~$\mathcal L$
whose formulas~$\psi$ have cardinality $|\psi|<\kappa$. 
\end{theorem}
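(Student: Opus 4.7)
My plan is to take $\varphi$ to be the $\mathcal L_{\omega,\omega}$-sentence $\forall x\,\forall y\,(x=y)$ in the empty signature. Then $\vartheta(\varphi)$ says that the given $\tau$-model $\mathfrak A$ has a one-element submodel $\{a\}$, which holds iff some $a$ in the universe of $\mathfrak A$ is the common interpretation of every constant symbol of $\tau$ and satisfies $f^{\mathfrak A}(a,\dots,a)=a$ for every functional symbol $f$ of $\tau$ of positive arity. This ties $\vartheta(\varphi)$ to the behaviour of all $\ge\kappa$ functional symbols simultaneously, which is what will defeat any sentence of cardinality $<\kappa$.

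Given any candidate sentence $\psi$ with $|\psi|<\kappa$ in a language of any order~$\alpha$, I would first observe that $\psi$ can depend on at most $|\psi|<\kappa$ non-logical symbols of $\tau$; let $S$ be this set. Since $\tau$ has $\ge\kappa$ functional symbols, there is some functional symbol $g\in\tau\setminus S$. Next I would build two $\tau$-models $\mathfrak A$ and $\mathfrak B$ on the common universe $\{a,b\}$ with identical interpretations of every symbol in $\tau\setminus\{g\}$ --- every constant going to $a$, every positive-arity function sending $(a,\dots,a)$ to $a$ and $(b,\dots,b)$ to $a$ (other values chosen arbitrarily), every predicate interpreted by the empty relation --- while letting them differ only on $g$: in $\mathfrak A$ set $g^{\mathfrak A}=a$ if $g$ is a constant and $g^{\mathfrak A}(a,\dots,a)=a$ otherwise, whereas in $\mathfrak B$ set $g^{\mathfrak B}=b$ if $g$ is a constant and $g^{\mathfrak B}(a,\dots,a)=b$ otherwise.

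Then $\{a\}$ is a submodel of $\mathfrak A$ witnessing $\mathfrak A\vDash\vartheta(\varphi)$. In $\mathfrak B$, neither $\{a\}$ nor $\{b\}$ is a submodel: $\{a\}$ either misses the constant $g$ or fails to be closed under $g$; and $\{b\}$ either misses some constant of $\tau\setminus\{g\}$ or fails closure under some positive-arity function of $\tau\setminus\{g\}$, using that $\tau\setminus\{g\}$ still contains $\ge\kappa$ functional symbols. Hence $\mathfrak B\not\vDash\vartheta(\varphi)$. On the other hand, $\mathfrak A$ and $\mathfrak B$ share the same universe and the same interpretations of every symbol in $S$, so their $S$-reducts coincide and they satisfy exactly the same formulas that mention only $S$-symbols, in any language of any order: higher-order quantifiers range over subsets and relations of the common universe and so are unaffected by the choice of $g^{\mathfrak A}$ versus $g^{\mathfrak B}$. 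In particular $\mathfrak A\vDash\psi$ iff $\mathfrak B\vDash\psi$, contradicting the hypothesis that $\psi\equiv\vartheta(\varphi)$.

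The only delicate point is the implicit appeal to the fact that, for any model-theoretic language in the sense of \cite{Barwise Feferman}, a formula $\psi$ of cardinality $<\kappa$ can mention only $<\kappa$ non-logical symbols and that its truth value in a structure is determined by the universe together with the interpretations of those symbols; this is clear for the languages $\mathcal L^{\alpha}_{\kappa,\kappa}$ considered here, which is all one needs.
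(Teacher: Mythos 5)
Your proposal is correct and follows essentially the same route as the paper: the same sentence $\forall x\,\forall y\,(x=y)$, the observation that a candidate $\psi$ with $|\psi|<\kappa$ can mention fewer than $\kappa$ of the $\ge\kappa$ functional symbols, and a pair of two-element models agreeing on all symbols occurring in $\psi$ but differing on whether a single-point submodel exists. The only (harmless) difference is that you treat arbitrary functional symbols directly and let the two models disagree on a single symbol $g\notin S$, whereas the paper first reduces to a signature of $\kappa$ constant symbols and splits their interpretations according to occurrence in $\psi$; your variant is, if anything, slightly more self-contained on that reduction step.
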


\begin{proof}
Clearly, it suffices to consider only a~signature~$\tau$ 
consisting of $\kappa$~constant symbols, say, $c_\alpha$, 
$\alpha<\kappa$. Let $\varphi$~be the sentence 
$\forall x\,\forall y\:x=y$; then $\vartheta(\varphi)$ 
states the existence of a~single-point submodel. 
Toward a~contradiction, assume that such an~$\mathcal L$
has some~$\psi$ equivalent to $\vartheta(\varphi)$. 
Let two models $\mathfrak A$ and~$\mathfrak B$ in~$\tau$
have the same two-point universe $\{a,b\}$, and let 
for all $\alpha<\kappa$, 
\begin{align*}
c^{\mathfrak A}_\alpha:=a,
\quad\text{and}\quad
c^{\mathfrak B}_\alpha:=
\left\{
\begin{array}{lll}
a&\text{if }c_\alpha\text{ occurs in }\psi,
\\
b&\text{otherwise.}
\end{array}
\right.
\end{align*}
Since $|\psi|<\kappa$, there exists $\alpha<\kappa$ 
such that $c^{\mathfrak B}_\alpha=b$. So we have: 
$\mathfrak A\vDash\psi$ iff $\mathfrak B\vDash\psi$ 
(as $\mathfrak A$ and $\mathfrak B$ satisfy the same 
formulas involving only symbols from~$\psi$), 
however, $\mathfrak A\vDash\vartheta(\varphi)$ 
and $\mathfrak B\vDash\neg\,\vartheta(\varphi)$ (as 
the singleton~$\{a\}$ forms a~submodel of~$\mathfrak A$ 
while $\mathfrak B$~has no single-point submodels). 
\end{proof}


Let $\vartheta_{\le\lambda}(\varphi)$ denote that 
$\varphi$~is satisfied in a~submodel generated by 
a~set of cardinality~$\le\lambda$. Obviously, 
$\vartheta_{\le\lambda}(\varphi)$ implies
$\vartheta(\varphi)$. We are going to show that 
$\vartheta_{\le\lambda}(\varphi)$ is an existential 
sentence in an appropriate first-order language. 
To simplify some formulations, we shall consider 
\emph{partial models} in which their operations 
can be only partial. An \emph{atomic diagram} of 
a~partial model~$\mathfrak A$ is defined in the same 
way as for usual models with total operations, i.e., 
it consists of all true in~$\mathfrak A$ atomic and 
negated atomic sentences of the language expanded by 
constant symbols for all elements of~$\mathfrak A$. 

\begin{lemma}\label{inaccessible}
Let $\kappa$ be $\omega$ or, more generally, 
an inaccessible cardinal, $\lambda<\kappa$, and 
$\varphi$ a~sentence in $\mathcal L_{\kappa,\kappa}$ 
in a~signature~$\tau$ with~$<\kappa$~functional 
(including constant) symbols. Then 
$\vartheta_{\le\lambda}(\varphi)$ is equivalent to 
an existential sentence in $\mathcal L_{\kappa,\kappa}$.
\end{lemma}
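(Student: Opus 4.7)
The plan is to convert $\vartheta_{\le\lambda}(\varphi)$ into an explicit existential $\mathcal L_{\kappa,\kappa}$-sentence by existentially witnessing the~$\le\lambda$ elements forming the universe of a suitable partial submodel. Introduce first-order variables $x_\xi$, $\xi<\lambda$; the target sentence will be
$\exists(x_\xi)_{\xi<\lambda}\,\widetilde\varphi(\bar x)$,
where $\widetilde\varphi(\bar x)$ is a formula of $\mathcal L_{\kappa,\kappa}$ without first-order quantifiers asserting that the partial submodel of~$\mathfrak A$ with universe $\{x_\xi:\xi<\lambda\}$ satisfies $\varphi$. Since the prefix binds $\lambda<\kappa$ variables, the result lies in $\varSigma^{0}_{1}(\mathcal L_{\kappa,\kappa})$.

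The translation $\varphi\mapsto\widetilde\varphi$ proceeds by recursion on build-up: Boolean connectives pass through unchanged, and a quantifier block $\exists\bar y\,\psi$ (with $|\bar y|<\kappa$) is replaced by $\bigvee_{\bar\eta\in\lambda^{|\bar y|}}\widetilde\psi[x_{\bar\eta}/\bar y]$ (and dually for $\forall$), which remains in $\mathcal L_{\kappa,\kappa}$ because $\lambda^{|\bar y|}<\kappa$ by strong inaccessibility of $\kappa$ (or finiteness when $\kappa=\omega$). The point requiring care is atomic subformulas, since $\tau$-terms need not be defined on the partial submodel: for each subterm~$s$ of $\varphi$ and each $\eta<\lambda$, we build by recursion on the depth of~$s$ a quantifier-free $\mathcal L_{\kappa,\kappa}$-formula $E_{s,\eta}(\bar x)$ expressing ``$s(\bar x)$ is defined in the partial submodel with value $x_\eta$''; put $E_{v_\xi,\eta}:=(x_\xi=x_\eta)$ and, for $s=F(s_1,\dots,s_n)$,
\[
E_{s,\eta}\;:=\;\bigvee_{(\eta_1,\dots,\eta_n)\in\lambda^n}
\Bigl(\,\bigwedge_{i\le n}E_{s_i,\eta_i}\,\wedge\,F(x_{\eta_1},\dots,x_{\eta_n})=x_\eta\Bigr).
\]
Then an atomic $R(t_1,\dots,t_n)$ is translated to $\bigvee_{(\eta_1,\dots,\eta_n)\in\lambda^n}\bigl(\bigwedge_{i\le n}E_{t_i,\eta_i}\,\wedge\,R(x_{\eta_1},\dots,x_{\eta_n})\bigr)$. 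Each $\tau$-term being finite, the recursion terminates; with $<\kappa$ functional symbols and $\lambda<\kappa$, every arising $\bigwedge$, $\bigvee$ has arity $<\kappa$.

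Equivalence of $\exists(x_\xi)_{\xi<\lambda}\,\widetilde\varphi$ with $\vartheta_{\le\lambda}(\varphi)$ then follows by a straightforward induction on $\varphi$, using that $E_{s,\eta}(\bar a)$ holds in $\mathfrak A$ if and only if the partial evaluation of~$s$ on~$\bar a$ in the partial submodel with universe $\{a_\xi:\xi<\lambda\}$ yields~$a_\eta$. I expect the only real obstacle to be the atomic clause: one must verify that the recursively built $E_{s,\eta}$ correctly matches the atomic-diagram convention for partial models introduced just before the lemma, and that all arising infinitary connectives have arity $<\kappa$ under the hypotheses on $\kappa$ and on the signature. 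The Boolean and quantifier steps of the induction are then automatic.
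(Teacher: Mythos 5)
Your construction expresses the wrong statement. The sentence $\exists(x_\xi)_{\xi<\lambda}\,\widetilde\varphi(\bar x)$ you build asserts that $\varphi$ holds in the \emph{fragment} (partial submodel) whose universe is exactly $\{x_\xi:\xi<\lambda\}$, under the convention that an atomic formula is true only when all its terms evaluate inside that set. But $\vartheta_{\le\lambda}(\varphi)$ is satisfaction of $\varphi$ in the submodel \emph{generated} by a set of cardinality $\le\lambda$, i.e.\ in the closure of the witnesses under the operations of $\tau$, which is in general a larger set; the quantifiers of $\varphi$ must range over all term values, not only over the generators. Concretely, let $\tau=\{F\}$ with $F$ unary, $\kappa=\omega$, $\lambda=1$, and $\varphi:=\forall y\,F(y)\ne y$. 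Your translation collapses to $\exists x\,F(x)\ne x$, which is true in the two-element model $\{a,b\}$ with $F(a)=F(b)=b$, whereas $\vartheta_{\le1}(\varphi)$ fails there: the submodel generated by $a$ is the whole model and contains the fixed point $b$. The other direction can fail as well (take $\varphi:=\exists y\,\exists z\,y\ne z$ in the same model: the submodel generated by $a$ satisfies it, but your translation with $\lambda=1$ becomes $x_0\ne x_0$). So the defect is not a matter of matching the atomic-diagram convention for partial terms; the whole recursion relativizes $\varphi$ to the wrong universe, and no adjustment of the atomic clause alone can repair it.

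In the purely predicate case your argument is fine and is essentially the paper's first step: there the generated submodel is just the generating set, and replacing a quantifier block over $\gamma$ variables by a disjunction over $\lambda^\gamma$ assignments (legitimate since $|\lambda^\gamma|<\kappa$ by inaccessibility) is exactly what the paper does. The functional case is handled in the paper by a different device, which is the real content of the lemma: expand $\tau$ by $\lambda$ new constants $c_\alpha$ naming the generators; observe (again using inaccessibility and the hypothesis of $<\kappa$ functional symbols) that, up to isomorphism, there are fewer than $\kappa$ models generated by these constants that satisfy $\varphi$, each of size $<\kappa$ and with atomic diagram of size $<\kappa$; then $\vartheta_{\le\lambda}(\varphi)$ is written as the existential closure, over the variables $x_\alpha$, of the disjunction over these isomorphism types of the conjunctions of their atomic diagrams with each $c_\alpha$ replaced by $x_\alpha$. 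Realizing such a diagram at the witnesses determines the generated submodel up to isomorphism, and this is precisely the information your fragment-relativization discards.
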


\begin{proof} 
Let us first consider signatures~$\tau$ without 
functional symbols. Then $\vartheta_{\le\lambda}(\varphi)$ 
is clearly equivalent to the first-order sentence 
$$
\Bigexists_{\alpha<\lambda}\,x_\alpha\;
\varphi^{\,\{x_\alpha:\,\alpha<\lambda\}} 
$$ 
where $\varphi^{\{x_\alpha:\,\alpha<\lambda\}}$ 
is the relativization of $\varphi$ to the set of
(first-order) variables $x_\alpha$, $\alpha<\lambda$,
which do not occur in~$\varphi$. Let us verify that 
the relativization is equivalent to an open formula 
in $\mathcal L_{\kappa,\kappa}(\tau)$ 
(with parameters $x_\alpha$, $\alpha<\lambda$); 
it will clearly follow that 
$
\bigexists_{\alpha<\lambda}\,x_\alpha\:
\varphi^{\,\{x_\alpha:\,\alpha<\lambda\}} 
$ 
is equivalent to 
a~$\varSigma^{0}_1(\mathcal L_{\kappa,\kappa})$-sentence.

Indeed, $\varphi^{\{x_\alpha:\,\alpha<\lambda\}}$ is 
obtained from~$\varphi$ by successively replacing each
subformula $\bigexists_{\beta<\gamma}\,y_\beta\:\psi$
with the $\mathcal L_{\kappa,\kappa}$-formula
$$
\Bigexists_{\beta<\gamma}\,y_\beta\;
\bigl(\psi\wedge
\bigwedge\nolimits_{\beta<\gamma}
\bigvee\nolimits_{\!\!\!\alpha<\lambda}
y_\beta=x_\alpha\bigr).
$$
The latter formula is equivalent to the formula
$$
\Bigexists_{\beta<\gamma}\,y_\beta\;
\bigl(\psi\wedge
\bigvee\nolimits_{\!\!\!f\in\lambda^\gamma}
\bigwedge\nolimits_{\beta<\gamma}
y_\beta=x_{f(\beta)}\bigr),
$$
which is still in $\mathcal L_{\kappa,\kappa}$ 
since $|\lambda^\gamma|<\kappa$ due to the condition
that $\kappa$~is inaccessible, and furthermore, 
to the open formula
$$
\bigvee\nolimits_{\!\!\!f\in\lambda^\gamma}
\psi(y_\beta/x_{f(\beta)})_{\beta<\gamma}
$$
where $\psi(y_\beta/x_{f(\beta)})_{\beta<\gamma}$ 
is obtained from~$\psi$ by substituting each 
variable~$y_\beta$ with the variable~$x_{f(\beta)}$. 
This eliminates all quantifiers in all subformulas 
of~$\varphi^{\,\{x_\alpha:\,\alpha<\lambda\}}$, 
as required. 

In the general case, the construction is slightly more 
complex. Let $\tau'$~expand $\tau$ by $\lambda$~new 
constant symbols~$c_\alpha$, $\alpha<\lambda$. 
We still have $|\tau'|<\kappa$. 
Hence, since $\kappa$~is inaccessible, 
there exist only~$<\kappa$~pairwise non-isomorphic
partial models in~$\tau'$ satisfying~$\varphi$ with 
the universe constisting of an interpretion of all 
closed terms; say, $\mathfrak B_\beta$, $\beta<\mu$, 
for some $\mu<\kappa$. Note that, though such partial 
models may have size~$>\lambda$ (they interpret not only 
the~$c_\alpha$ but all terms constructed from them), 
all they have size~$\le\nu$ for some fixed $\nu$ 
with $\lambda\le\nu<\kappa$.
For any $\beta<\mu$, let $\varDelta_\beta$ be 
the atomic diagram of~$\mathfrak B_\beta$, and 
$\psi_\beta$~its conjunction $\bigwedge\varDelta_\beta$, 
which is still in $\mathcal L_{\kappa,\kappa}$ 
as $|\varDelta_\beta|<\kappa$. Let $x_{\alpha}$,
$\alpha<\lambda$, be variables not occurring
in~$\varDelta_\beta$, and let $\varphi_\beta$~be 
the formula $\psi_\beta(c_\alpha/x_\alpha)$ obtained 
from $\psi_\beta$ by replacing each constant 
symbol~$c_\alpha$ with the variable~$x_{\alpha}$. 
Then $\varphi_\beta$~is an open formula in~$\tau$, and 
the $\varSigma^{0}_1(\mathcal L_{\kappa,\kappa})$-sentence 
$$
\Bigexists_{\alpha<\lambda}\,x_\alpha\;
\varphi_\beta(x_\alpha)_{\alpha<\lambda}
$$ 
characterizes the partial model~$\mathfrak B_\beta$ 
up to isomorphism. It follows that 
$\vartheta_{\le\lambda}(\varphi)$ is equivalent to the 
$\varSigma^{0}_1(\mathcal L_{\kappa,\kappa})$-sentence
$$
\Bigexists_{\alpha<\lambda}\,x_\alpha\;
\bigvee\nolimits_{\!\!\!\beta<\mu}
\varphi_\beta(x_\alpha)_{\alpha<\lambda}.
$$
This completes the proof.
\end{proof}

\begin{remark}
The argument shows that, whenever $\kappa$~is
an inaccessible cardinal~$>\omega$, then moreover, 
$\vartheta_{\le\lambda}(\varphi)$ is equivalent to an
existential sentence in $\mathcal L_{\kappa,\lambda^+}\,$.
Also we can see that Lemma~\ref{inaccessible} remains 
true for signatures with ${<}\kappa$-ary symbols.
For $\tau$ with~$\ge\kappa$~functional symbols, 
even $\vartheta_{\le1}(\varphi)$ is non-expressible 
in any language with formulas of size~$<\kappa$, 
by the proof of Theorem~\ref{total non-expressibility}. 
\end{remark}


A~{\it fragment\/} of a~model~$\mathfrak A$ is an its 
partial submodel, i.e., a~subset of the universe of
$\mathfrak A$ together with the inherited structure. 
Thus for models in signatures without functional symbols, 
fragments are just submodels; while for models in 
signatures with functional symbols, operations on 
fragments can be partial. A~fragment can be considered 
as a~submodel of the corresponding model in the purely 
predicate language obtained from the original language 
by replacing each functional symbol of arity~$\ge1$ 
with a~predicate symbol having the same interpretation. 
Clearly, for any fragment there exists the smallest 
submodel including it, the submodel {\it generated\/} 
by the fragment.

Let $\mathfrak A$ be a~model and $I$~an ideal over~$A$ 
(the universe of~$\mathfrak A$) with $\bigcup I=A$. 
We shall say that the system $(\mathfrak B_i)_{i\in I}$ 
of models (in the same signature) is 
\emph{coherent in}~$\mathfrak A$ iff for every $i\in I$, 
the set~$i$ is included into~$B_i$ (the universe 
of~$\mathfrak B_i$) and the fragments of $\mathfrak A$ 
and of $\mathfrak B_i$ given by~$i$ coincide. 

As usual, an \emph{upper cone} of a~partially ordered set
$(P,\le)$ is an $C\subseteq P$ which is upward closed, i.e., 
such that $b\in C$ whenever $a\le b$ for some $a\in C$. 
Clearly, the set of upper cones of~$P$ generates a~filter 
over~$P$ whenever $P$~is directed. 

\begin{lemma}\label{coherent}
Let $(\mathfrak B_i)_{i\in I}$ be coherent 
in~$\mathfrak A$ and $D$~a~filter over~$I$ extending 
the filter generated by upper cones of $(I,\subseteq)$. 
Then $\mathfrak A$ isomorphically embeds into 
$\mathfrak B:=\prod_D\mathfrak B_i$, the product 
of the models~$\mathfrak B_i$ reduced by~$D$. 
\end{lemma}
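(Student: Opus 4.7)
The natural idea is to build the embedding $e\colon\mathfrak A\to\mathfrak B$ diagonally, in the spirit of Łoś's theorem. For each $a\in A$, since $\bigcup I=A$, pick some $i_a\in I$ with $a\in i_a$; then the upper cone $\{i\in I:i_a\subseteq i\}$ lies in $D$, and on this cone $a\in i\subseteq B_i$ (using that $i\subseteq B_i$ by coherence), so the set
$$
C_a:=\{i\in I:a\in i\}
$$
belongs to $D$. Define $f_a\in\prod_{i\in I}B_i$ by $f_a(i):=a$ for $i\in C_a$ and $f_a(i)$ arbitrary otherwise, and put $e(a):=[f_a]_D$.

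The first thing to check is injectivity: if $a\ne b$, then on $C_a\cap C_b\in D$ we have $f_a(i)=a\ne b=f_b(i)$, so $e(a)\ne e(b)$. Next, I would verify preservation and reflection of atomic and negated atomic formulas. For an $n$-ary predicate symbol $R$ and $a_1,\ldots,a_n\in A$, the set $C:=C_{a_1}\cap\cdots\cap C_{a_n}$ lies in $D$, and for $i\in C$ we have $\{a_1,\ldots,a_n\}\subseteq i$; by the coherence assumption, the fragments of $\mathfrak A$ and $\mathfrak B_i$ induced by $i$ coincide, so $R^{\mathfrak A}(\bar a)$ iff $R^{\mathfrak B_i}(f_{a_1}(i),\ldots,f_{a_n}(i))$. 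Invoking the filter property of $D$, this translates into $\mathfrak A\vDash R(\bar a)$ iff $\mathfrak B\vDash R(e(\bar a))$, and similarly for $\neg R$.

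The only delicate point is the treatment of functional symbols. For an $n$-ary $F$ with $b:=F^{\mathfrak A}(a_1,\ldots,a_n)$, I would consider $C':=C_{a_1}\cap\cdots\cap C_{a_n}\cap C_b\in D$; on $C'$ both the inputs $a_1,\ldots,a_n$ and the value $b$ lie in $i$, so they belong to the common fragment of $\mathfrak A$ and $\mathfrak B_i$. Since $F^{\mathfrak A}(\bar a)=b\in i$, the partial operation inherited from $\mathfrak A$ on $i$ is defined at $\bar a$ with value $b$; by coincidence of the two fragments, the same holds for the partial operation inherited from $\mathfrak B_i$, whence $F^{\mathfrak B_i}(f_{a_1}(i),\ldots,f_{a_n}(i))=b=f_b(i)$ for all $i\in C'$. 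Therefore $F^{\mathfrak B}(e(\bar a))=[f_b]_D=e(F^{\mathfrak A}(\bar a))$.

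The main obstacle is precisely this last step: coherence is stated in terms of \emph{partial} fragments, and one has to be careful that the equation $F^{\mathfrak B_i}(\bar a)=b$ is forced by the agreement of the fragments only when $b$ itself lies in $i$, which is why $C_b$ must be added to the intersection. Everything else — injectivity, preservation of relations, the use of upper cones to show $C_a\in D$ — is routine once the functions $f_a$ are defined as above.
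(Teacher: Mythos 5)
Your proposal is correct and follows essentially the same route as the paper's own proof: the same diagonal map (functions equal to $a$ on the upward-closed set of indices containing $a$, arbitrary elsewhere), the same use of the cone filter to place the relevant index sets in $D$, and the same key device of adding the index condition for the value $F^{\mathfrak A}(\bar a)$ so that the coinciding partial fragments force $F^{\mathfrak B_i}$ to agree there. The only differences are cosmetic (explicit injectivity check, cleaner bookkeeping via the sets $C_a$).
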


\begin{proof}
For each $i\in I$ we fix some $b_i\in B_i$, and 
for each $a\in A$, let $c_a\in\prod_{i\in I}B_i$ 
be the function defined by letting for all $i\in I$,
\begin{align*}
c_a(i):=
\left\{
\begin{array}{lll}
a&\text{if }a\in i,
\\
b_i&\text{otherwise.}
\end{array}
\right.
\end{align*}
Now define $f:A\to B$ by letting for all $a\in A$,
$$
f(a):=[c_a]_D,
$$
and check that $f$~is an isomorphic embedding 
of $\mathfrak A$ into~$\mathfrak B$. 

Let $R$ be an $n$-ary predicate symbol in our signature. 
We must check that for all $a_0,\ldots,a_{n-1}$ in~$A$, 
$$
R^\mathfrak A(a_0,\ldots,a_{n-1})
\;\text{ iff }\;
R^\mathfrak B(f(a_0),\ldots,f(a_{n-1})).
$$
Since $\bigcup I=A$, for any $k<n$ there is $i_k\in I$ 
with $a_k\in B_{i_k}$, and since $I$~is an ideal, 
$\bigcup_{k<n}i_k\in I$. 
Moreover, since $i\subseteq\mathfrak B_i$ for all $i\in I$, 
whenever $\bigcup_{k<n}i_k\subseteq i$ then 
$\{a_k\}_{k<n}\subseteq B_i$ and $a_k=c_{a_k}(i)$, and so, 
since the fragments of $\mathfrak A$ and $\mathfrak B_i$ 
given by~$i$ coincide, $R^\mathfrak A(a_0,\ldots,a_{n-1})$ 
is equivalent to 
$R^{\mathfrak B_i}(c_{a_0}(i),\ldots,c_{a_{n-1}}(i))$. 
Thus we have:
\begin{align*}
R^\mathfrak A(a_0,\ldots,a_{n-1})
&\;\text{ iff }\;
\bigl\{i\in I:
R^{\mathfrak B_i}(c_{a_0}(i),\ldots,c_{a_{n-1}}(i))
\bigr\}
\text{ is an upper cone of }I
\\
&\;\text{ iff }\;
\bigl\{i\in I:
R^{\mathfrak B_i}(c_{a_0}(i),\ldots,c_{a_{n-1}}(i))
\bigr\}
\in D
\end{align*}
where one implication in the second equivalence holds 
since $D$~extends the filter generated by upper cones of~$I$ 
while the converse implication holds since the property 
inherits upward. Finally, the latter assertion is equivalent 
to $R^{\mathfrak B}([c_{a_0}]_D,\ldots,[c_{a_{n-1}}]_D)$ 
by definition of reduced products, and thus to 
$R^{\mathfrak B}(f(a_0),\ldots,f(a_{n-1}))$, as required. 

Let now $F$ be an $n$-ary functional symbol in the signature. 
We must check that for all $a_0,\ldots,a_{n-1}$ in~$A$, 
$$
f(F^\mathfrak A(a_0,\ldots,a_{n-1}))=
F^\mathfrak B(f(a_0),\ldots,f(a_{n-1})).
$$
Indeed, 
$$
f(F^\mathfrak A(a_0,\ldots,a_{n-1}))=
[c_{F^\mathfrak A(a_0,\ldots,a_{n-1})}]_D
$$
while 
\begin{gather*}
F^\mathfrak B(f(a_0),\ldots,f(a_{n-1}))=
F^\mathfrak B([c_{a_0}]_D,\ldots,[c_{a_{n-1}}]_D)=
[b]_D
\\
\text{ where }
b(i)=
F^{\mathfrak B_i}(c_{a_0}(i),\ldots,c_{a_{n-1}}(i)).
\end{gather*}
Again, if for $k<n$, $i_k\in I$ is such that 
$a_k\in B_{i_k}$, and also $i_{n}\in I$ is such that 
$F^\mathfrak A(a_0,\ldots,a_{n-1})\in B_{i_n}$,
whenever $\bigcup_{k\le n}i_k\subseteq i$ then 
$
c_{F^\mathfrak A(a_0,\ldots,a_{n-1})}(i)=
F^\mathfrak A(a_0,\ldots,a_{n-1})=
F^{\mathfrak B_i}(a_0,\ldots,a_{n-1})
$
and also 
$
F^{\mathfrak B_i}(c_{a_0}(i),\ldots,c_{a_{n-1}}(i))=
F^{\mathfrak B_i}(a_0,\ldots,a_{n-1}).
$
It follows 
$
[c_{F^\mathfrak A(a_0,\ldots,a_{n-1})}]_D=[b]_D,
$
as required. 

The proof is complete.
\end{proof}

\begin{remark}
In general, even if $D$~is an ultrafilter and all 
the~$\mathfrak B_i$ are submodels of~$\mathfrak A$, the 
embedding is not elementary, and moreover, $\mathfrak A$ 
and $\mathfrak B$ are not elementarily equivalent, 
even in the sense of $\mathcal L_{\omega,\omega}$. 
E.g., let $\mathfrak A=(\omega,<)$ and 
$\mathfrak B_i=(i,<)$ for all finite $i\subseteq\omega$. 
Then if $\varphi$ is $\exists x\,\forall y\,\neg\,(x<y)$, 
we have: $\mathfrak A\vDash\varphi$, but for all~$i$, 
$\mathfrak B_i\vDash\neg\,\varphi$, and hence, 
$\mathfrak B\vDash\neg\,\varphi$.
\end{remark} 

\begin{remark}
If the ideal~$I$~is $\kappa$-complete, i.e., 
$\bigcup E\in I$ for all $E\in P_\kappa(I)$,
then Lemma~\ref{coherent} remains true even for signatures
involving ${<}\kappa$-ary symbols. Let us point out also 
that whenever $I$~is $\kappa$-complete then so is 
the filter~$D_I$ over~$I$ generated by upper cones 
in $(I,\subseteq)$ (but of course not any filter~$D$
extending~$D_I$), and that in the case $I=P_\kappa(A)$, 
$D_I$~is the least $\kappa$-complete \emph{fine} filter 
over $P_\kappa(A)$.
\end{remark}


The theorem below is the main result of this note; it 
extends Corollary~\ref{existential} by providing new
characterizations\;--\;syntactical in item~(iii) and 
semantical in items (iv) and~(v)\;--\;of the case when
$\vartheta(\varphi)$ is equivalent to a~first-order formula.

\begin{theorem}\label{main}
Let $\kappa$ be $\omega$ or, more generally, 
a~compact cardinal and $\varphi$ a~sentence 
in the language $\mathcal L_{\kappa,\kappa}$ 
in a~signature~$\tau$ with~$<\kappa$~functional 
(including constant) symbols. 
The following are equivalent:
\begin{itemize}
\setlength\itemsep{-0.3em}
\item[(i)]
$\vartheta(\varphi)$ is equivalent to 
a~$\varSigma^{0}_1(\mathcal L_{\kappa,\kappa})$-sentence;
\item[(ii)]
$\vartheta(\varphi)$ is equivalent to 
an $\mathcal L_{\kappa,\kappa}$-sentence;
\item[(iii)]
$\vartheta(\varphi)$ is equivalent to 
a~$\varPi^{1}_1(\mathcal L^{2}_{\kappa,\kappa})$-sentence;
\item[(iv)]
any model satisfying $\varphi$ has a~fragment of 
cardinality~$<\kappa$ such that each model having 
the fragment satisfies~$\vartheta(\varphi)$;
\item[(v)]
there exists $\lambda<\kappa$ such that 
any model satisfying $\varphi$ has a~fragment 
of cardinality~$\le\lambda$ and such that each model 
having the fragment satisfies~$\vartheta(\varphi)$.
\end{itemize}
\end{theorem}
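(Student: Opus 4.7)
I plan to close the cycle (i)$\Rightarrow$(ii)$\Rightarrow$(iii)$\Rightarrow$(iv)$\Rightarrow$(v)$\Rightarrow$(i). The first two implications are trivial from the syntactic inclusions $\varSigma^{0}_1(\mathcal L_{\kappa,\kappa})\subseteq\mathcal L_{\kappa,\kappa}\subseteq\varPi^{1}_1(\mathcal L^{2}_{\kappa,\kappa})$, and (ii)$\Rightarrow$(i) is already given by Corollary~\ref{existential}, so the real content lies in the three remaining arrows.

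For (v)$\Rightarrow$(i), let $\tau_\varphi$ be the subsignature containing the ${<}\kappa$~functional symbols of $\tau$ together with the predicate symbols occurring in $\varphi$, so $|\tau_\varphi|<\kappa$. I first note that whether a fragment is \emph{witnessing} -- in the sense that every ambient model having it satisfies $\vartheta(\varphi)$ -- depends only on its $\tau_\varphi$-reduct: modifying the interpretation of predicates absent from $\varphi$ on the fragment alters neither the functions nor the truth of $\varphi$ in any submodel, hence does not alter $\vartheta(\varphi)$. By inaccessibility of $\kappa$, the family $\mathcal D$ of isomorphism types of witnessing $\tau_\varphi$-fragments of size~$\le\lambda$ has cardinality at most $2^{|\tau_\varphi|\cdot\lambda}<\kappa$, and each such type is captured by the conjunction of its atomic diagram, an open $\mathcal L_{\kappa,\kappa}$-formula $\varphi_{\mathfrak C}(\bar x)$ with $|\bar x|=\lambda$. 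Condition~(v) then yields directly $\vartheta(\varphi)\equiv\exists\bar x\bigvee_{\mathfrak C\in\mathcal D}\varphi_{\mathfrak C}(\bar x)$, a $\varSigma^{0}_1(\mathcal L_{\kappa,\kappa})$-sentence.

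For (iv)$\Rightarrow$(v), I argue by contradiction via an ultraproduct on $\kappa$. If (v) fails, pick for each $\lambda<\kappa$ a model $\mathfrak A_\lambda\vDash\varphi$ with no witnessing fragment of size~$\le\lambda$, and form $\mathfrak A^*:=\prod_U\mathfrak A_\lambda$ for a $\kappa$-complete non-principal ultrafilter $U$ on $\kappa$, which exists because compact $\kappa$ is measurable. Then $\mathfrak A^*\vDash\varphi$ by the preservation of $\mathcal L_{\kappa,\kappa}$-sentences under $\kappa$-complete ultraproducts, so (iv) furnishes some $\mu^*<\kappa$ and a witnessing fragment of $\mathfrak A^*$ of that size. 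But ``the model has a witnessing fragment of size~$\le\mu^*$'' is itself a $\varSigma^{0}_1(\mathcal L_{\kappa,\kappa})$-sentence by the counting argument of the previous paragraph, and hence descends to $\mathfrak A_\lambda$ for ultrafilter-many $\lambda$; any such $\lambda>\mu^*$ contradicts the choice of $\mathfrak A_\lambda$.

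The main obstacle is (iii)$\Rightarrow$(iv), which I handle by combining Lemma~\ref{coherent} with preservation of $\varSigma^{1}_1$-statements under $\kappa$-complete ultraproducts. Suppose some $\mathfrak A\vDash\varphi$ has no witnessing fragment of size~$<\kappa$: then for every $i\in P_\kappa(A)$ there is a model $\mathfrak B_i$ containing the fragment of $\mathfrak A$ on $i$ with $\mathfrak B_i\vDash\neg\,\vartheta(\varphi)$. The system $(\mathfrak B_i)_{i\in P_\kappa(A)}$ is coherent in $\mathfrak A$, and the filter of upper cones on $P_\kappa(A)$ is $\kappa$-complete; by strong compactness of $\kappa$ it extends to a $\kappa$-complete ultrafilter $U$. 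Lemma~\ref{coherent} then embeds $\mathfrak A$ into $\mathfrak B:=\prod_U\mathfrak B_i$, and preservation under extensions (Corollary~\ref{extensions}) forces $\mathfrak B\vDash\vartheta(\varphi)$. On the other hand, by (iii) the sentence $\neg\,\vartheta(\varphi)$ is in $\varSigma^{1}_1(\mathcal L^{2}_{\kappa,\kappa})$, and such sentences are preserved under $\kappa$-complete ultraproducts; hence $\mathfrak B\vDash\neg\,\vartheta(\varphi)$, a contradiction. The delicate point is securing a single $\kappa$-complete ultrafilter that simultaneously extends the upper-cone filter, which is precisely the role played by strong compactness of $\kappa$.
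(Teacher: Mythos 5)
Your proposal is correct and its skeleton coincides with the paper's: the same cycle (i)$\Rightarrow$(ii)$\Rightarrow$(iii)$\Rightarrow$(iv)$\Rightarrow$(v)$\Rightarrow$(i), the same coherent-system argument for (iii)$\Rightarrow$(iv) via Lemma~\ref{coherent} and preservation of $\varSigma^{1}_1(\mathcal L^{2}_{\kappa,\kappa})$ under $\kappa$-complete ultraproducts (the paper takes a $\kappa$-complete \emph{fine} ultrafilter on $P_\kappa(A)$, which is exactly an extension of your upper-cone filter, so the two uses of compactness are interchangeable), and the same ultraproduct over $\kappa$ for (iv)$\Rightarrow$(v). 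Where you genuinely diverge is in how the existential sentence is produced: the paper routes both (iv)$\Rightarrow$(v) and (v)$\Rightarrow$(i) through Lemma~\ref{inaccessible}, i.e., through expressibility of $\vartheta_{\le\lambda}(\varphi)$ in terms of $\le\lambda$-\emph{generated submodels}, which requires passing between ``witnessing fragment'' and ``fragment generating a submodel satisfying $\varphi$''; you instead classify the small witnessing fragments themselves, after the (correct, and worth stating as a separate observation) reduction that being witnessing depends only on the reduct to the function symbols together with the predicate symbols occurring in~$\varphi$, so that inaccessibility gives $<\kappa$ isomorphism types, each pinned down by an open formula, and ``there is a witnessing fragment of size $\le\mu$'' becomes a single $\varSigma^{0}_1(\mathcal L_{\kappa,\kappa})$-sentence that serves both as the target in (v)$\Rightarrow$(i) and as the \L o\'s-transferable sentence in (iv)$\Rightarrow$(v). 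This buys a proof that matches the literal witnessing-fragment form of (iv)/(v) with no detour, and your trimming to $\tau_\varphi$ keeps all counts below $\kappa$ even when $\tau$ has $\ge\kappa$ predicate symbols, a point the paper's lemma handles only implicitly (its proof assumes $|\tau'|<\kappa$); what it loses is Lemma~\ref{inaccessible} as a reusable standalone statement, which the paper exploits elsewhere (e.g., in the remark weakening compactness to inaccessibility). One precision you should add: for partial fragments take the \emph{unnested} atomic diagram (atoms $R(\bar x)$, $F(\bar x)=x_\beta$, equalities, and their negations, with undefinedness of $F$ at a tuple expressed by $\bigwedge_{\beta}F(\bar x)\ne x_\beta$), since with nested terms and the ``undefined implies false'' convention satisfaction of the diagram in an ambient model no longer characterizes the fragment; this is the same level of care the paper itself needs for its diagrams of partial models.
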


\begin{proof}
(i)$\to$(ii) and (ii)$\to$(iii). 
Trivial.

(iii)$\to$(iv).
Assume that (iv)~does not hold. Then there is $\mathfrak A$
such that $\mathfrak A\vDash\varphi$, and for every set 
$i\subseteq A$ of size~$<\kappa$, there exists 
a~model~$\mathfrak B_i$ such that $i\subseteq B_i$, 
the fragments of $\mathfrak A$ and of $\mathfrak B_i$ 
given by~$i$ coincide, and $\mathfrak B_i$~has 
no submodels satisfying~$\varphi$, 
thus $\mathfrak B_i\vDash\neg\,\vartheta(\varphi)$. 
Let $\mathfrak B:=\prod_D\mathfrak B_i$ where $D$~is 
a~$\kappa$-complete ultrafilter over $P_\kappa(A)$ which 
is fine, i.e.,~extends the filter generated by the sets 
$\{i\in P_\kappa(A):a\in i\}$ for all $a\in A$ (recall 
that the existence of such an ultrafilter follows from
the compactness of~$\kappa$; see, e.g., \cite{Kanamori}, 
Corollary~22.18). By Lemma~\ref{coherent}, 
$\mathfrak A$~isomorphically embeds into~$\mathfrak B$;
therefore, $\mathfrak B\vDash\vartheta(\varphi)$. 
Let us show that (iii)~fails. 

Indeed, if $\vartheta(\varphi)$~is equivalent to 
a~$\varPi^{1}_1(\mathcal L^{2}_{\kappa,\kappa})$-formula, 
then $\neg\,\vartheta(\varphi)$ is equivalent to 
a~$\varSigma^{1}_1(\mathcal L^{2}_{\kappa,\kappa})$-formula,
and hence, is preserved under ultraproducts by 
$\kappa$-complete ultrafilters (as was pointed out in 
the proof of Corollary~\ref{ultraproducts}), whence 
we get also $\mathfrak B\vDash\neg\,\vartheta(\varphi)$;
a~contradiction. 

(iv)$\to$(v). 
Assume that (v)~does not hold. Let us again use 
an ultraproduct argument: for any $\alpha<\kappa$ pick
a~model~$\mathfrak A_\alpha$ which satisfies~$\varphi$ 
and does not have fragments of size~$\le\lambda$ 
generating submodels satisfying~$\varphi$, 
pick any $\kappa$-complete ultrafilter~$D$ over~$\kappa$, 
and consider $\mathfrak A:=\prod_D\mathfrak A_\alpha$. 
Clearly, $\mathfrak A$~satisfies~$\varphi$. Let us show 
that $\mathfrak A$~does not have fragments of size~$<\kappa$
generating submodels satisfying~$\varphi$, thus proving
that (iv)~fails. 

Indeed, if there is $\lambda<\kappa$ such that 
$\mathfrak A$ has some $\lambda$-generated submodel 
satisfying~$\varphi$, i.e., 
$\mathfrak A\vDash\vartheta_{\le\lambda}(\varphi)$, 
then this fact is expressed by a~first-order (and even 
existential) sentence by Lemma~\ref{inaccessible},
and hence, should hold in $\mathfrak A_\alpha$ 
for $D$-almost all~$\alpha$, which is, however, not true. 

(v)$\to$(i). 
Assume (v). Then $\vartheta(\varphi)$ is equivalent to 
$\vartheta_{\le\lambda}(\varphi)$, which is equivalent to 
a~$\varSigma^{0}_1(\mathcal L_{\kappa,\kappa})$-sentence
by Lemma~\ref{inaccessible}, thus proving~(i).

The theorem is proved.
\end{proof}


\begin{corollary}\label{predicate}
Let $\kappa$ be $\omega$ or, more generally, 
a~compact cardinal and $\varphi$ a~sentence in 
$\mathcal L_{\kappa,\kappa}$ in a~signature without 
functional symbols. The following are equivalent:
\begin{itemize}
\setlength\itemsep{-0.3em}
\item[(i)]
$\vartheta(\varphi)$ is equivalent to 
a~sentence in $\mathcal L_{\kappa,\kappa}$
(or $\varSigma^{0}_1(\mathcal L_{\kappa,\kappa})$, or 
$\varPi^{1}_1(\mathcal L^{2}_{\kappa,\kappa})$);
\item[(ii)]
any model satisfying $\varphi$ has a~submodel 
of cardinality~$<\kappa$ satisfying~$\varphi$ 
(or $\vartheta(\varphi)$); 
\item[(iii)]
there exists $\lambda<\kappa$ such that 
any model satisfying~$\varphi$ has a~submodel 
of cardinality~$\le\lambda$ satisfying~$\varphi$ 
(or $\vartheta(\varphi)$). 
\end{itemize}
\end{corollary}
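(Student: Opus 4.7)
The plan is to deduce the corollary as a direct specialization of Theorem~\ref{main}, exploiting the fact that in a signature without functional symbols the ``fragment'' machinery of the main theorem collapses. First, I would note that for such signatures any subset of the universe of a model $\mathfrak A$ inherits the structure of a submodel (there being no operations that might be only partially defined), so a fragment \emph{is} a submodel; in particular, the submodel generated by a subset coincides with the subset itself together with the inherited relations.

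Next, I would verify that in this predicate case the clause ``each model having the fragment $\mathfrak B_0$ satisfies $\vartheta(\varphi)$'' appearing in items (iv) and~(v) of Theorem~\ref{main} is equivalent to the simpler assertion ``$\mathfrak B_0 \vDash \vartheta(\varphi)$''. One direction is immediate by taking the ambient model to be $\mathfrak B_0$ itself. For the converse, if $\mathfrak C \subseteq \mathfrak B_0$ is a submodel with $\mathfrak C \vDash \varphi$, then $\mathfrak C$ is automatically a submodel of any $\mathfrak A' \supseteq \mathfrak B_0$, so $\mathfrak A' \vDash \vartheta(\varphi)$. Hence items (iv) and~(v) of the main theorem become, respectively, items (ii) and~(iii) of the corollary, read in the ``$\vartheta(\varphi)$'' variant of the parenthetical.

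Third, I would show that the two parenthetical readings of (ii) and~(iii)\;--\;``satisfying $\varphi$'' versus ``satisfying $\vartheta(\varphi)$''\;--\;are equivalent. That $\varphi$ implies $\vartheta(\varphi)$ is Proposition~\ref{connectives}(ii). Conversely, if $\mathfrak B_0$ of cardinality $<\kappa$ (resp.\ $\le \lambda$) satisfies $\vartheta(\varphi)$, then a witnessing sub-submodel $\mathfrak C \subseteq \mathfrak B_0$ satisfies $\varphi$ and has $|\mathfrak C| \le |\mathfrak B_0|$, so $\mathfrak C$ may be used in place of $\mathfrak B_0$. The three syntactic formulations clustered in~(i) of the corollary correspond precisely to items (i), (ii), (iii) of Theorem~\ref{main}, which are equivalent there.

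Assembling these observations, the equivalence of (i), (ii), (iii) of the corollary follows immediately from the equivalence of (i)--(v) of Theorem~\ref{main}. I do not anticipate any real obstacle; the whole point is that in a purely predicate signature the external quantification over superstructures present in conditions (iv) and~(v) of the main theorem becomes vacuous, since the witnessing small submodel already serves as its own ambient extension.
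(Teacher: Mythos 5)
Your proposal is correct and follows essentially the same route as the paper, which likewise derives the corollary from Theorem~\ref{main} via the observation that in purely predicate signatures fragments and submodels coincide; you merely spell out the (routine) verifications that the quantification over ambient models in items (iv)--(v) becomes vacuous and that the two parenthetical readings agree, details the paper leaves implicit.
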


\begin{proof} 
As in such signatures the notions of fragments and 
submodels coincide, this follows from Theorem~\ref{main}. 
\end{proof}


\begin{example}
Recall that models in signatures having only
unary functional symbols are called \emph{unoids}, 
and if such a~symbol is unique, \emph{unars}.

Let a~signature~$\tau$ consist of a~single 
unary functional symbol~$F$. Let $\varphi$~be 
the $\mathcal L_{\omega,\omega}$-sentence 
$\exists x\:F(x)\ne x$ in~$\tau$. Then 
$\vartheta(\varphi)$~is equivalent to $\varphi$~itself. 
Clearly, the cardinality $\lambda<\omega$ of a~finite 
fragment determining satisfiability of $\vartheta(\varphi)$
in submodels extending~it, stated in 
Theorem~\ref{main}\,(v), is~$1$; note that there are 
models of~$\varphi$ without finite submodels at all 
(e.g., so is the free $1$-generated unar $(\omega,S)$ 
where $S$~is the successor operation). 

More generally, let $\tau$~consist of $\kappa$~unary 
functional symbols~$F_\alpha$, $\alpha<\kappa$, and let
$\varphi$~be the $\mathcal L_{\kappa^+,\omega}$-sentence
$
\exists x
\bigvee_{\!\alpha<\kappa}F_\alpha(x)\ne x
$
in~$\tau$. Then again, $\vartheta(\varphi)$~is 
equivalent to~$\varphi$, $\lambda$~is~$1$, and 
there are models of~$\varphi$ without submodels 
of size~$<\kappa$ (e.g., any free $\tau$-unoid). 
\end{example}

\begin{example}
Let a~signature~$\tau$ consist of a~single binary 
predicate symbol~$R$. 

Let $\varphi$~be the $\mathcal L_{\omega,\omega}$-sentence 
$\exists x\,\forall y\:R(x,y)$ in~$\tau$. Then 
$\vartheta(\varphi)$ is equivalent to $\exists x\,R(x,x)$,
and $\lambda$ from Corollary~\ref{predicate}\,(iii)
is~$1$. Observe that $\varphi$~is in 
$\varSigma^{0}_2\,\setminus\,\varPi^{0}_2$. 
Indeed, assume that $\varphi$~is in~$\varPi^{0}_2$.
Recall that $\varPi^{0}_2$-formulas are characterized 
as those that are preserved under chain unions 
(see~\cite{Chang Keisler}, Theorem~3.2.2). Let 
$\mathfrak A_n$ be $(n+1,\ge)$, and let $\mathfrak A$~be 
the union of the chain of the~$\mathfrak A_n$, $n\in\omega$. 
Then $\mathfrak A_n\vDash\varphi$ for all $n\in\omega$
but $\mathfrak A\vDash\neg\,\varphi$; a~contradiction. 

Let also $\psi$~be the $\mathcal L_{\omega,\omega}$-sentence
$\forall x\,\exists y\:R(x,y)$ in~$\tau$. Then 
$\vartheta(\psi)$ is not equivalent to any 
$\mathcal L_{\omega,\omega}$-sentence. Indeed, 
the model $(\omega,<)$ satisfies $\psi$ but includes no
finite submodels satisfying~$\psi$; apply 
Corollary~\ref{predicate}\,(iii). 
Similar arguments show that 
$\psi$~is in $\varPi^{0}_2\,\setminus\,\varSigma^{0}_2$
and that $\vartheta(\neg\,\psi)$ is 
$\exists x\,\neg\,R(x,x)$ while $\vartheta(\neg\,\varphi)$ 
is not equivalent to 
any $\mathcal L_{\omega,\omega}$-sentence.
\end{example}


\begin{remark}
Although any~$\varphi$ with the first-order 
expressible $\vartheta(\varphi)$ determines the least size 
of a~small fragment from Theorem~\ref{main}, which is thus
independent of a~particular model, it does not determine,
even up to isomorphism, the fragment itself. E.g., if
$\varphi$~is any true formula then $\vartheta(\varphi)$ 
is equivalent to~$\varphi$; hence in a~purely predicate signature~$\tau$ the discussed size is~$1$ but there can 
be many non-isomorphic single-point models\;--\;arbitrarily 
many in an appropriate~$\tau$. 
\end{remark}

\begin{remark} 
Despite the low complexity of the first-order expressible 
$\vartheta(\varphi)$, the complexity of $\varphi$~itself 
can be much higher: there exist non-first-order 
expressible~$\varphi$ with first-order
expressible~$\vartheta(\varphi)$. Let $\varphi$~state 
the finiteness, i.e., let any model satisfy $\varphi$ 
iff it is finite. As well-known, $\varphi$~is not 
$\mathcal L_{\omega,\omega}$-expressible (though is 
expressible in $\mathcal L_{\omega_1,\omega}$ or 
else in the weak second-order language; see, 
e.g.,~\cite{Barwise Feferman}). However, 
in any signature without functional symbols,
$\vartheta(\varphi)$~is equivalent to any true formula. 
This example is generalized to the languages
$\mathcal L_{\kappa,\kappa}$ with arbitrarily 
large~$\kappa$. 
\end{remark}

\begin{remark}
Theorem~\ref{main} (and Corollary~\ref{predicate}) 
remains true even for signatures with ${<}\kappa$-ary 
symbols; for Lemmas \ref{inaccessible} 
and~\ref{coherent} this has been already noticed, other
arguments in the proof do not require any modifying.
\end{remark}

\begin{remark}
The results of this note admit further improvements 
and generalizations. 

First, in Theorem~\ref{main} 
(and Corollary~\ref{predicate}) it suffices to assume that 
$\kappa$~is inaccessible. Moreover, for $\kappa>\omega$, 
if a~sentence~$\varphi$ in $\mathcal L_{\kappa,\kappa}$ 
holds in a~model then it holds in its submodel of 
size~$<\kappa$ (see, e.g., \cite{Barwise Feferman},
Corollary~3.1.3), hence $\vartheta(\varphi)$ is 
equivalent to $\vartheta_{\le\lambda}(\varphi)$ 
for some $\lambda<\kappa$; then items (i)--(v) of
Theorem~\ref{main} follow due to Lemma~\ref{inaccessible} 
(even in a~nicer form with submodels instead of fragments, 
like as in Corollary~\ref{predicate}). 

Further, the obtained results can be relativized to 
theories~$T$ in a~given language. Another natural 
generalization concerns the question when $\vartheta(T)$ 
is equivalent to some theory in the same language. 
\end{remark}

\begin{question}
Characterize first-order sentences~$\varphi$ for 
which $\vartheta(\varphi)$ are first-order sentences 
in $\mathcal L_{\omega,\omega}$. 
\end{question}

\begin{acknowledgement}
I~am grateful to N.\,L.~Poliakov for discussions 
on the subject of this note and especially for his 
valuable help in handling the case of functional 
signatures in Lemma~\ref{inaccessible}. I~am indebted 
to F.\,N.~Pakhomov for his remark about the number 
of functional symbols in that lemma, which leaded me 
to Theorem~\ref{total non-expressibility}, and for 
his proposal to weaken the large cardinal property 
of~$\kappa$ to inaccessibility by using the downward 
L\"owenheim--Skolem theorem for $\mathcal L_{\kappa,\kappa}$. 
I~also express my appreciation to I.\,B.~Shapirovsky 
who read this note and made several useful comments. 
\end{acknowledgement}


\end{document}